\newcommand{\NN}{\mathbb{N}}
\theoremstyle{plain}
\newtheorem{thm}{Theorem}
\newtheorem{cor}[thm]{Corollary}
\newtheorem{prop}[thm]{Proposition}
\newtheorem{lem}[thm]{Lemma}
\newcommand{\imod}[1]{\,\left(\textnormal{mod }#1\right)\,}
\begin{document}

\title{Additive Representations of Natural Numbers}

\author[F.~J.~Francis]{Forrest J. Francis}
	\address{School of Science, UNSW Canberra, Northcott Drive, Australia ACT 2612} 
	\email{f.francis@student.adfa.edu.au}

\author[E.~S.~Lee]{Ethan S. Lee}
	\address{School of Science, UNSW Canberra, Northcott Drive, Australia ACT 2612} 
	\email{ethan.s.lee@student.adfa.edu.au}

\maketitle

\begin{abstract}
Every natural number greater than two may be written as the sum of a prime and a square-free number. We establish several generalisations of this, by placing divisibility conditions on the square-free number.
\end{abstract}

\section{Introduction}

In 1742, Goldbach conjectured that every even integer greater than two is the sum of two primes.
In 1966, Chen \cite{Chen66,Chen78} proved that every sufficiently large even integer is the sum of one prime and a product of at most two primes\footnote{Yamada \cite{Yamada2015} has shown that Chen's theorem holds for even integers larger than $\exp(\exp(38))$.}, and in 2013, Helfgott \cite{helfgott2013} proved the ternary Goldbach conjecture, which states every odd integer larger than five is the sum of three primes.

Since a complete proof of the Goldbach conjecture remains out of reach, we consider results where we relax one of the primes to be a square-free number instead. To this end, Dudek \cite{dudek2017} proved the following version of Estermann's result \cite{estermann1931}.

\begin{thm}[Dudek, 2017]\label{thm:dudek}
Every integer greater than two is the sum of a prime and a square-free number.
\end{thm}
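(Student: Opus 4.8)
The plan is to prove Theorem~\ref{thm:dudek} by a counting argument: for each integer $n>2$ I would show that
\[
R(n) \;=\; \sum_{\substack{p \le n-1 \\ p \text{ prime}}} \mu^2(n-p)
\]
is positive, where $\mu^2(m)=1$ exactly when $m$ is square-free and $\mu^2(m)=0$ otherwise. Every prime $p$ contributing a nonzero term furnishes a representation $n = p + (n-p)$ with $n-p \ge 1$ square-free (recall $1$ is square-free), so it suffices to establish $R(n)\ge 1$.

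First I would detect square-freeness through the convolution identity $\mu^2(m) = \sum_{d^2 \mid m}\mu(d)$ and interchange the order of summation to obtain
\[
R(n) \;=\; \sum_{d \le \sqrt{n}} \mu(d)\, \pi(n; d^2, n),
\]
where $\pi(x;q,a)$ denotes the number of primes up to $x$ in the residue class $a \bmod q$. The expected main term arises by replacing each inner count with $\pi(n)/\varphi(d^2)$ for those $d$ coprime to $n$; a class $p \equiv n \pmod{d^2}$ with $\gcd(d,n)>1$ forces a common prime factor to divide $p$, so such classes contain at most one prime and contribute negligibly. Using $\varphi(d^2)=d\varphi(d)$, the main term equals $\mathfrak{S}(n)\,\pi(n)$ with
\[
\mathfrak{S}(n) \;=\; \sum_{\gcd(d,n)=1}\frac{\mu(d)}{\varphi(d^2)} \;=\; \prod_{\substack{q\text{ prime}\\ q\nmid n}}\left(1 - \frac{1}{q(q-1)}\right) \;\ge\; \prod_{q\text{ prime}}\left(1 - \frac{1}{q(q-1)}\right) \approx 0.3739,
\]
the last product being Artin's constant. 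Thus $\mathfrak{S}(n)$ exceeds a fixed positive constant and the main term has size $\gg \pi(n)$.

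The substantial work, and what I expect to be the main obstacle, is bounding the error $\sum_{d}\mu(d)\bigl(\pi(n;d^2,n) - \pi(n)/\varphi(d^2)\bigr)$, since the moduli $d^2$ run all the way up to $n$. I would split the sum at a parameter $z$. For the small moduli $d \le z$, of which there are boundedly many, an explicit form of the prime number theorem in arithmetic progressions shows each discrepancy is $o(\pi(n))$, so their total is $o(\pi(n))$. For $z < d \le n^{1/2-\delta}$ no individual asymptotic is needed: the Brun--Titchmarsh inequality gives $\pi(n;d^2,n) \ll n/\bigl(\varphi(d^2)\log(n/d^2)\bigr) \ll_\delta \pi(n)/\varphi(d^2)$, and summing against the convergent tail $\sum_{d>z}1/\varphi(d^2)$ bounds this range by $\eps\,\pi(n)$ once $z=z(\eps)$ is large. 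Finally, for $n^{1/2-\delta} < d \le \sqrt{n}$ the progression $p\equiv n \pmod{d^2}$ meets $[1,n]$ in $O(1)$ points, so this range contributes only $O(n^{1/2+\delta}) = o(\pi(n))$. Combining the three ranges shows the error is at most $(\eps + o(1))\pi(n)$, which is strictly smaller than the main term $\mathfrak{S}(n)\pi(n)$ once $\eps$ is chosen small and $n$ is large; hence $R(n)>0$.

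Because every estimate above can be made explicit, the argument yields an effective threshold $N_0$ beyond which $R(n)\ge 1$ is guaranteed. I would then settle the finitely many remaining integers $2 < n \le N_0$ by direct computation, exhibiting in each case a prime $p$ with $n-p$ square-free. The delicate point throughout is quantitative rather than structural: the singular series sits comfortably above $1/3$, leaving ample room over the error, but producing a genuinely small $N_0$ demands uniform, explicit control of the prime-counting discrepancies across the entire range of moduli.
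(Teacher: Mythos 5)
Your proposal follows essentially the same route as Dudek's actual proof (which the paper cites and whose strategy it reproduces and refines in Lemma \ref{thm:analytic_bound}): M\"obius detection of square-freeness, interchange of summation to primes in progressions modulo $d^2$, a singular series bounded below by Artin's constant, a three-range split of the moduli handled respectively by explicit prime-counting estimates in arithmetic progressions, Brun--Titchmarsh, and trivial bounds, then computation up to an explicit threshold. The only differences are cosmetic: you use unweighted counts $\pi(n;d^2,n)$ where the paper uses $\theta$-weights, and your remark that the largest moduli give progressions with $O(1)$ points should read $O(n^{2\delta})$ points each, though your stated total $O(n^{1/2+\delta})$ for that range is still correct.
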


A simple extension of Theorem \ref{thm:dudek} is given in Corollary \ref{thm:prime_prime_sq_free_theorem}.\footnote{To establish Corollary \ref{thm:prime_prime_sq_free_theorem}, observe that if $n > 4$, then $n-2 >2$, so Theorem \ref{thm:dudek} implies that $n-2 = p + \eta$ for at least one prime $p$ and square-free number $\eta$.}

\begin{cor}\label{thm:prime_prime_sq_free_theorem}
Every integer greater than four may be written as the sum of two primes and a square-free number. 
\end{cor}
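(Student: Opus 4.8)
The plan is to reduce the statement immediately to Theorem \ref{thm:dudek} by peeling off a single prime. Since the corollary permits \emph{two} primes in the representation, and Dudek's theorem already furnishes one prime together with a square-free number, the natural strategy is to reserve the smallest prime, namely $2$, apply Theorem \ref{thm:dudek} to what remains, and then recombine.

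Concretely, given an integer $n > 4$, I would set $m = n - 2$. The hypothesis $n > 4$ is precisely what guarantees $m > 2$, which places $m$ in the range where Theorem \ref{thm:dudek} applies. That theorem then yields a prime $p$ and a square-free number $\eta$ with $m = p + \eta$, and hence
\[
n = 2 + p + \eta.
\]
Because $2$ is itself prime, this writes $n$ as a sum of two primes (namely $2$ and $p$) together with the square-free number $\eta$, which is exactly the claimed form.

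There is no real obstacle here: the corollary is an immediate consequence of Theorem \ref{thm:dudek}, as the paper's footnote already indicates. The only point worth checking is the threshold. The bound ``greater than four'' is not arbitrary but forced: after subtracting the prime $2$ we must land strictly above $2$ in order to invoke Dudek's result, and the requirement $n - 2 > 2$ is equivalent to $n > 4$. (One could instead subtract any other fixed prime, but the choice $2$ yields the weakest hypothesis on $n$ and therefore the sharpest version of the statement obtainable by this argument.)
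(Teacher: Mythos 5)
Your proof is correct and is exactly the argument the paper gives (in the footnote to Corollary \ref{thm:prime_prime_sq_free_theorem}): subtract $2$, note $n-2>2$, and apply Theorem \ref{thm:dudek}. Nothing further is needed.
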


As an extension of Theorem \ref{thm:dudek}, Yau \cite{yau2019} established a uniform bound for the number of representations of an integer as a prime in a fixed residue class plus a square-free number.
Instead of placing constraints on the prime, we will impose an additional condition on the divisors of the square-free numbers in Theorem \ref{thm:dudek}.
That is, suppose $q$ is prime and $n > n_0$ where $n_0$ is small. Does there exist at least one prime $p$ and a square-free integer $\eta$ which is co-prime to $q$ and
\begin{equation}\label{eqn:referenceme_coprime_q}\tag{Q1}
    n = p + \eta \, ?
\end{equation}
Moreover, does there exist at least one pair of primes $p_1$, $p_2$ and a square-free integer $\eta$ such that $(\eta, q)=1$ and
\begin{equation}\label{eqn:referenceme_two_coprime_q}\tag{Q2}
    n = p_1 + p_2 + \eta \, ?
\end{equation}


We have answered \eqref{eqn:referenceme_coprime_q} and \eqref{eqn:referenceme_two_coprime_q} for primes $2\leq q < 10^5$ in the following results. Our results are stated for the best possible range of $n$.

\begin{thm}\label{thm:one_prime_two}
Every even integer greater than three can be written as the sum of a prime and an odd square-free number.\footnote{For sufficiently large even integers, Theorem \ref{thm:one_prime_two} is also a consequence of Chen's theorem.}
\end{thm}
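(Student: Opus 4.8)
The key reduction is that the oddness of $\eta$ is automatic. Since $n$ is even, writing $n = p + \eta$ with $p$ an \emph{odd} prime forces $\eta = n - p$ to be odd; the only way an even prime could intervene is $p = 2$, which yields the even number $\eta = n-2$. Thus it suffices to prove that every even $n > 3$ admits at least one odd prime $p$ with $n - p$ square-free, that is, that
$$R(n) := \sum_{2 < p < n} \mu^2(n-p) \geq 1.$$
The plan is to aim for a quantitative lower bound on $R(n)$ rather than mere positivity, so that the finitely many remaining cases can be closed by direct computation.

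To estimate $R(n)$ I would detect square-freeness by $\mu^2(m) = \sum_{d^2 \mid m} \mu(d)$ and interchange the order of summation, giving
$$R(n) = \sum_{d \leq \sqrt{n}} \mu(d)\, \pi\bigl(n; d^2, n\bigr),$$
where $\pi(n; d^2, n)$ counts odd primes $p < n$ with $p \equiv n \pmod{d^2}$. Because $n - p$ is odd, only odd $d$ contribute, which is exactly where the condition $q = 2$ enters the main term. I would split the sum at a parameter $D$. For $d \leq D$ I would insert an explicit form of the prime number theorem for arithmetic progressions, replacing $\pi(n; d^2, n)$ by $\Li(n)/\phi(d^2)$ up to a controlled error, noting that $\phi(d^2) = d\,\phi(d)$ and that moduli with $\gcd(d,n) > 1$ contribute at most one prime. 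Summing the main pieces produces $\mathfrak{S}(n)\,\Li(n)$, where $\mathfrak{S}(n)$ is a singular series whose Euler product runs over odd primes $\ell \nmid n$ with local factors $1 - 1/(\ell(\ell-1))$; this product is bounded below by a positive absolute constant uniformly in $n$. For the tail $d > D$ I would bound $\pi(n; d^2, n)$ by an explicit Brun--Titchmarsh inequality and control $\sum_{d > D} 1/\phi(d^2)$, choosing $D$ (a small power of $n$) so that the tail is smaller than the main term.

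The main obstacle is the explicit bookkeeping of the error terms: making every constant in the prime-counting and Brun--Titchmarsh estimates effective, and choosing $D$ and the resulting threshold $n_0$ small enough that the cases $4 \leq n \leq n_0$ can be verified by machine. A useful sanity check and partial shortcut is the following dichotomy. If $n - 2$ is \emph{not} square-free, then in any representation supplied by Theorem \ref{thm:dudek} the prime cannot be $2$ (otherwise $\eta = n-2$ would be square-free), so $p$ is automatically odd and $\eta$ is odd square-free. Only when $n - 2$ \emph{is} square-free must one genuinely produce a second, odd-prime representation, and it is precisely for this subclass that the lower bound $R(n) \geq 1$ does real work; for large $n$ the asymptotic $R(n) \sim \mathfrak{S}(n)\,\Li(n) \to \infty$ settles it, leaving only the explicit threshold and the finite verification to complete the argument.
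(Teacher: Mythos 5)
Your route is sound but genuinely different from the paper's, which proves this theorem with no analytic estimate and no computation at all. The paper reduces the claim to $R_2(n)>0$ for even $n$, notes that the excluded progression contributes $\theta(n;2,n)\leq\log 2$ since $(n,2)>1$, and then invokes Theorem \ref{thm:dudek} to assert a representation $n=p+\eta$ with $p\neq 2$, the stated reason being that $n-2$ is even. Your proposal instead runs the full Estermann--Dudek machinery specialised to modulus $2$ (square-free detection by $\sum_{d^2\mid m}\mu(d)$, a split at a parameter $D$, explicit prime number theorem in progressions plus Brun--Titchmarsh, a singular series bounded below), followed by a finite machine check; this is exactly the engine the paper builds as Lemma \ref{thm:analytic_bound} and deploys for Theorems \ref{thm:one_prime_three} and \ref{thm:one_prime_general}, but deliberately avoids here, precisely so that no computation is needed for $q=2$. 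That is what the paper's route buys: a two-line, numerics-free deduction. What your route buys is rigour at the one point where the paper's argument is fragile: your dichotomy observes, correctly, that Theorem \ref{thm:dudek} forces $p\neq 2$ only when $n-2$ is \emph{not} square-free, whereas when $n-2$ is square-free the representation $n=2+(n-2)$ is a perfectly legitimate output of that theorem (even numbers can be square-free), so parity alone cannot rule out $p=2$; in that case something quantitative, or a direct construction of a second representation, really is required, and your lower bound plus finite verification supplies it. The bookkeeping you defer is not a gap in principle: for even $n$ the quantity to beat is only $\log 2$, so the paper's own Lemma \ref{thm:analytic_bound} (or your unweighted variant of it) settles all $n\geq 4.81\cdot 10^9$, and the verified binary Goldbach conjecture up to $4\cdot 10^{18}$ disposes of the small cases, since $n=p_1+p_2$ with both primes odd exhibits $n-p_1$ as an odd square-free number.
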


\begin{thm}\label{thm:one_prime_three}
Every integer greater than two except for eleven can be written as the sum of a prime and a square-free number which is co-prime to three.
\end{thm}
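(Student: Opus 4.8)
The plan is to adapt the Estermann--Dudek counting argument behind Theorem \ref{thm:dudek}, but weighting by an indicator that detects squarefreeness \emph{and} coprimality to $3$ simultaneously. For $n>2$ set
\[
R_3(n)=\sum_{\substack{p<n\\ 3\nmid n-p}}\mu^2(n-p),
\]
the number of admissible representations; it suffices to show $R_3(n)>0$. Writing $\mu^2(m)=\sum_{d^2\mid m}\mu(d)$ and swapping the order of summation gives
\[
R_3(n)=\sum_{d\le\sqrt{n}}\mu(d)\,\#\{p<n:\ p\equiv n\imod{d^2},\ p\not\equiv n\imod 3\}.
\]

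First I would observe that every term with $3\mid d$ vanishes: if $\mu(d)\neq0$ and $3\mid d$ then $9\mid d^2$, so $d^2\mid n-p$ forces $3\mid n-p$, contradicting $3\nmid n-p$. Hence only $d$ coprime to $3$ survive, and for those the two congruence conditions have coprime moduli and merge, by the Chinese Remainder Theorem, into a count of primes in residue classes modulo $3d^2$. I would then extract the main term from the prime number theorem in arithmetic progressions. A short local computation shows that, for $(d,3n)=1$, the number of admissible residues modulo $3d^2$ that are coprime to the modulus equals $c_3(n)$, where $c_3(n)=2$ when $3\mid n$ (both nonzero residues mod $3$ are allowed) and $c_3(n)=1$ when $3\nmid n$ (exactly one is). Since $\phi(3d^2)=2d\,\phi(d)$, summing the main terms yields
\[
M(n)=c_3(n)\,\frac{\Li(n)}{2}\sum_{\substack{d\le\sqrt n\\ 3\nmid d,\ (d,n)=1}}\frac{\mu(d)}{d\,\phi(d)},
\]
and completing the absolutely convergent $d$-sum to infinity produces the positive Euler product $\prod_{\ell\nmid 3n}\bigl(1-\tfrac{1}{\ell(\ell-1)}\bigr)$. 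Thus the expected main term is of size $\gg n/\log n$ and strictly positive for every $n$.

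The crux is to show $M(n)$ dominates the error for all $n$ beyond an explicit threshold $N_0$. I would split the $d$-sum at a parameter $D$: for $d\le D$ I would invoke an explicit form of the prime number theorem in arithmetic progressions to bound $\sum_{d\le D}|\mu(d)|\,|E(n;3d^2)|$, and for $D<d\le\sqrt n$ I would bound the prime count by the Brun--Titchmarsh inequality, summing the resulting $\ll\frac{n}{\log n}\sum_{d>D}\frac{1}{\phi(d^2)}$ tail (with the ranges $d$ near $\sqrt n$, where $n-p$ is forced to be small, handled directly). Balancing $D$ makes both contributions $o(M(n))$. The hard part is precisely that the case $3\nmid n$ carries the smaller main term (the factor $c_3(n)=1$ rather than $2$), so the margin over the error is tightest there and this case governs the size of $N_0$; pushing $N_0$ down to a computationally feasible range is the real work.

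Finally I would verify $R_3(n)>0$ directly for all $2<n\le N_0$. This confirms the bound and isolates $n=11$ as the unique failure: the only primes below $11$ give $11-2=9$, $11-3=8$, $11-5=6$, and $11-7=4$, each of which is either non-squarefree or divisible by $3$. All other $n$ in the range succeed (for instance $3=2+1$), completing the proof.
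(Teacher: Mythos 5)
Your proposal is sound in outline, and it takes a genuinely different route from the paper. The paper never restructures the sieve around the coprimality condition; instead it writes $R_3(n)=R(n)-\sum_{p\equiv n\imod{3}}\mu^2(n-p)\log p$, reuses its Lemma \ref{thm:analytic_bound} (a Dudek-style lower bound $R(n)/n>0.37395-\cdots$), and then must confront the obstruction that the naive bound $\theta(n;3,n)\approx n/\varphi(3)=n/2$ exceeds $0.37395\,n$. It resolves this by an inclusion--exclusion over the square divisors $9$, $4$, $25$ inside the class $p\equiv n\imod 3$, sharpening the subtracted term to $\tfrac{19}{120}n+0.00592\,\tfrac{n}{\log n}$, which leaves a margin of about $0.374-0.158\approx 0.216$. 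Your approach instead builds the mod-$3$ condition into the sieve from the start: the observation that $3\mid d$ terms vanish, plus the CRT merge, replaces Artin's constant by the Euler product with the $\ell=3$ factor removed, i.e.\ $\tfrac{6}{5}\cdot 0.3739\approx 0.449$, scaled by $c_3(n)/2$; your worst case $3\nmid n$ gives margin $\approx 0.224$, essentially the same room as the paper's. What your route buys is uniformity and transparency: it explains structurally why $q=3$ is not genuinely harder (the local factor at $3$ is simply excised), it avoids the ad hoc $19/120$ computation, and it would extend verbatim to composite $q$, which the paper flags only as plausible future work. What the paper's route buys is economy: one lemma serves Theorems \ref{thm:one_prime_three}, \ref{thm:one_prime_general} and Corollary \ref{thm:two_prime_general} simultaneously, and all explicit constants (Bennett et al.\ for moduli up to $10^5$, the threshold $8\cdot 10^9$, the computation below it) are already pinned down, whereas your sketch still owes the explicit constants, the threshold $N_0$, and two small bookkeeping steps: the terms with $(d,n)>1$ (each contributes at most one prime, exactly as the paper's trivial bound $\theta(n;a^2,n)\leq\log n$) and the residue class $p\equiv 0\imod 3$ in the CRT count (at most the single prime $p=3$). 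Note also that your unweighted count $R_3(n)$ means you need explicit bounds for $\pi(x;q,a)$ rather than $\theta(x;q,a)$, or a partial-summation step; the available explicit literature (and the paper) is phrased in terms of $\theta$, so the log-weighted sum is the more convenient normalization. Your identification of $n=11$ as the unique exception is correct and matches the paper's computation.
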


\begin{thm}\label{thm:one_prime_general}
Suppose $3 < q < 10^5$ is prime. Every integer greater than two can be written as the sum of a prime and a square-free number co-prime to $q$.
\end{thm}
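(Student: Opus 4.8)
The plan is to count representations and show the count is positive. For a prime $q$ with $3 < q < 10^5$ and an integer $n > 2$, set
\[
r_q(n) = \sum_{p < n} \mu^2(n-p)\,\big[\,q \nmid (n-p)\,\big],
\]
so that Theorem \ref{thm:one_prime_general} follows once I show $r_q(n) \ge 1$. Using $\mu^2(m) = \sum_{d^2 \mid m}\mu(d)$ and interchanging the order of summation, I would rewrite this as
\[
r_q(n) = \sum_{\substack{d \le \sqrt{n} \\ q \nmid d}} \mu(d)\Big(\pi(n; d^2, n) - \pi(n; d^2 q, n)\Big),
\]
where $\pi(x;k,a)$ counts primes up to $x$ in the class $a \bmod k$. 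The terms with $q \mid d$ vanish, since then $d^2 \mid n-p$ forces $q \mid n-p$, contradicting the Iverson bracket.

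First I would isolate the main term by substituting $\pi(x;k,a) \approx \mathrm{Li}(x)/\phi(k)$, valid when $(a,k)=1$. Because $\phi(d^2) = d\,\phi(d)$ for squarefree $d$, a short Euler-product computation gives a main term of the shape $C_q(n)\,\mathrm{Li}(n)$. When $q \nmid n$ one finds $C_q(n) = \tfrac{q-2}{q-1}\prod_{p \nmid nq}\big(1 - \tfrac{1}{p(p-1)}\big)$, and when $q \mid n$ the subtracted term drops out entirely, leaving the larger value $\prod_{p \nmid n}\big(1 - \tfrac{1}{p(p-1)}\big)$. The qualitative point I would stress is that $C_q(n)$ is bounded below by an absolute positive constant for every prime $q > 3$: dropping factors less than one only enlarges the product, and the sole $q$-dependent factor $\tfrac{q-2}{q-1}$ never falls below $\tfrac34$ (attained at $q=5$).

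The substantive work is bounding the error
\[
\sum_{\substack{d \le \sqrt{n}\\ q \nmid d}} |\mu(d)|\Big(\big|E(n;d^2,n)\big| + \big|E(n;d^2 q,n)\big|\Big),
\qquad E(x;k,a) = \pi(x;k,a) - \frac{\mathrm{Li}(x)}{\phi(k)}.
\]
I would split the $d$-range at a threshold $D$: for $d \le D$ invoke explicit effective estimates for primes in arithmetic progressions to the moduli $d^2$ and $d^2 q$, and for $D < d \le \sqrt{n}$ control the tail with the Brun--Titchmarsh inequality together with the trivial bound $\pi(n;k,n) \le n/k + 1$. Optimising $D$ should produce an explicit threshold $N_0 = N_0(q)$ above which $C_q(n)\,\mathrm{Li}(n)$ dominates the error, giving $r_q(n) > 0$ for all $n \ge N_0(q)$.

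For the remaining range $2 < n < N_0(q)$ I would argue computationally, exploiting a convenient reduction: for a fixed $n$, a representation $n = p + \eta$ can fail to be coprime to $q$ only if $q$ divides $\eta$ for \emph{every} admissible squarefree $\eta = n-p$. Hence I would enumerate the finitely many valid $\eta$ for each $n$ and record the (typically empty) set of primes dividing all of them, checking directly that no such common prime lies in $(3,10^5)$ — a single sweep over $n$ up to $\max_{q} N_0(q)$ suffices. The main obstacle I anticipate is keeping the error estimate strong enough, uniformly in the growing modulus $q$, that $N_0(q)$ stays within computational reach over all primes $q < 10^5$; the tension is precisely that $C_q(n)$ is smallest near $q=5$ while the moduli $d^2 q$ appearing in the error grow with $q$.
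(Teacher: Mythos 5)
Your counting identity and Euler-product computation of the main term are both correct, and your computational plan for small $n$ (one sweep, checking a representation coprime to every prime in the range simultaneously) is essentially what the paper does. The gap is in the analytic half, and it is exactly the obstacle you flag in your final sentence, which you leave unresolved. For $d \le D$ you propose to ``invoke explicit effective estimates for primes in arithmetic progressions to the moduli $d^2$ and $d^2q$.'' No such estimates exist in the range you need: the strongest explicit results available (Bennett et al.\ \cite{bennett2018}, which the paper uses) cover moduli up to $10^5$ only, while your moduli $d^2q$ range up to $D^2\cdot 10^5$ --- already $d=2$ with $q$ near $10^5$ falls outside the known range, and for $q=5$ you lose access once $d > 141$. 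The natural fallback --- Brun--Titchmarsh upper bounds on $\pi(n;d^2q,n)$ when $\mu(d)=+1$, and the trivial bound $\pi(n;d^2q,n)\ge 0$ when $\mu(d)=-1$ --- fails precisely where your constant $C_q(n)$ is smallest: for $q=5$ and $n\approx 10^{10}$, $D=316$, these substitutions cost roughly
\begin{equation}
\frac{n}{q-1}\left(\frac{2\cdot 1.95}{\log\bigl(n/(D^2q)\bigr)} + \frac{1.95}{\log n}\right) \approx 0.054\,n,
\end{equation}
which swamps the main term $C_5(n)\,\mathrm{Li}(n)\approx 0.013\,n$. A hybrid (Bennett et al.\ when $d^2q\le 10^5$, Brun--Titchmarsh beyond) can likely be made to work, but that analysis is the real content of the proof and is missing.

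The paper sidesteps this entirely by never pushing the coprimality condition through the sieve expansion. It works with the weighted count and simply discards the square-free condition in the ``bad'' sum:
\begin{equation}
R_q(n) \;=\; R(n) - \sum_{\substack{p\le n\\ p\equiv n \imod{q}}}\mu^2(n-p)\log p \;\ge\; R(n) - \theta(n;q,n),
\end{equation}
so the only modulus beyond those already inside $R(n)$ is $q$ itself, squarely within the $[3,10^5]$ range of \cite{bennett2018}. Positivity then follows from the paper's Lemma \ref{thm:analytic_bound} (a Dudek-style lower bound $R(n)/n > 0.37395 - \cdots$) together with $\theta(n;q,n) \approx n/\varphi(q) \le n/4$ for $q\ge 5$, since Artin's constant exceeds $1/4$; the choice $A=0.33$ closes the inequality for $n \ge 8\cdot 10^9$. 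This ``subtract an upper bound for the discarded representations'' step is the idea your decomposition is missing, and it also explains the hypothesis $q>3$: for $q=3$ one has $1/\varphi(3)=1/2 > 0.37395$, which is why the paper must treat that case by a separate inclusion--exclusion argument.
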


\begin{cor}\label{thm:two_prime_general}
Suppose $2 \leq q < 10^5$ is prime. Every integer greater than four can be written as the sum of two primes and a square-free number co-prime to $q$.
\end{cor}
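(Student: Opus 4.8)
The plan is to mirror the reduction used for Corollary \ref{thm:prime_prime_sq_free_theorem}: subtract a single well-chosen prime $p_2$ from $n$ so that the remainder $n - p_2$ falls under the hypotheses of whichever one-prime theorem corresponds to the given $q$, apply that theorem to write $n - p_2 = p_1 + \eta$ with $\eta$ square-free and co-prime to $q$, and conclude $n = p_1 + p_2 + \eta$. Since the corollary allows the two primes to coincide, the natural default is $p_2 = 2$. I would split the argument into three cases, according to whether $3 < q < 10^5$, $q = 3$, or $q = 2$, matching Theorems \ref{thm:one_prime_general}, \ref{thm:one_prime_three}, and \ref{thm:one_prime_two} respectively.

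For $3 < q < 10^5$ the deduction is immediate: take $p_2 = 2$, and note that $n > 4$ forces $n - 2 > 2$, so Theorem \ref{thm:one_prime_general} applies to $n - 2$ with no exceptional values. For $q = 3$, again set $p_2 = 2$ so that $n - 2 > 2$ for every $n > 4$; Theorem \ref{thm:one_prime_three} then applies unless $n - 2 = 11$, i.e. $n = 13$. This single value I would dispatch by hand, for instance via $13 = 3 + 3 + 7$, where $7$ is square-free and co-prime to $3$ (equivalently, one may instead take $p_2 = 3$ and observe that $n - 3 = 10 \neq 11$ lies in the range covered by Theorem \ref{thm:one_prime_three}).

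The case $q = 2$ requires a parity adjustment, since Theorem \ref{thm:one_prime_two} only accepts even inputs exceeding three. If $n$ is even then $n \geq 6$, so $n - 2 \geq 4$ is even and greater than three; taking $p_2 = 2$ and applying Theorem \ref{thm:one_prime_two} gives $n - 2 = p_1 + \eta$ with $\eta$ odd and square-free, hence co-prime to $2$. If $n$ is odd then $n \geq 5$, and for $n \geq 7$ I would take the odd prime $p_2 = 3$, so that $n - 3 \geq 4$ is even and greater than three and Theorem \ref{thm:one_prime_two} again applies. The only value not covered by these two subcases is $n = 5$, which I would settle directly by $5 = 2 + 2 + 1$.

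I do not anticipate a deep obstacle here, as this is a routine deduction from the three one-prime theorems. The only genuine care is bookkeeping in the case $q = 2$, where the parity restriction in Theorem \ref{thm:one_prime_two} forces the choice of $p_2$ to depend on the parity of $n$ and leaves the small value $n = 5$ to be verified explicitly, together with the lone exception $n = 13$ when $q = 3$.
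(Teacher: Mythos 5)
Your proof is correct, and for $q = 2$ it coincides exactly with the paper's own argument (subtract $2$ from even $n$, subtract $3$ from odd $n$, and dispatch $n = 5$ via $5 = 2 + 2 + 1$). For $q \geq 3$, however, you take a genuinely different and considerably more elementary route. The paper does not fix the subtracted prime in advance: it introduces the weighted count $T(n) = \sum_{p \leq n,\, n - p \notin \{1,2,11\}} \mu^2(n-p)\log p$, observes $T(n) > R(n) - 3\log n$, and applies Lemma \ref{thm:analytic_bound} (with $A = 0.385$) to get $T(n) > 0$ for $n \geq 8\cdot 10^9$; any prime $p_3$ counted by $T(n)$ then satisfies $n - p_3 > 2$ and $n - p_3 \neq 11$, so Theorem \ref{thm:one_prime_general} or Theorem \ref{thm:one_prime_three} applies to $n - p_3$. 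The remaining range $5 \leq n < 8\cdot 10^9$ is handled by a separate computation for even $n$ (re-running the algorithm of Section \ref{sec:computation} with $S' = S \setminus \{1,2,11\}$) and by Helfgott's ternary Goldbach theorem for odd $n$. Your choice of $p_2 = 2$ throughout (with the single exception $n = 13$ when $q = 3$, fixed by $13 = 3 + 3 + 7$) eliminates all of this: no new analytic estimate and no new computation are needed, making the corollary an immediate consequence of the three one-prime theorems, exactly parallel to the footnote deduction of Corollary \ref{thm:prime_prime_sq_free_theorem} from Theorem \ref{thm:dudek}. What the paper's heavier machinery buys is quantitative information---a positive lower bound on $T(n)/n$ shows that for large $n$ there are many admissible primes $p_3$, not just one---but that extra strength is not required for the statement as given; your argument is the shorter, fully sufficient deduction.
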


Computations suggest that one could establish similar results for composite $q$. The authors also believe that a similar method of proof is plausible, if the auxiliary results in Section \ref{ssec:aux_results} can be extended in the appropriate manner.

In future work, it may also be interesting to investigate the quantity $\max{S_q}$, where an \textit{exception set} $S_q$ contains all the integers which do not have a representation as a prime plus a square-free number co-prime to $q$ for any integer $q > 1$.  For example, Theorem \ref{thm:one_prime_three} implies $\max{S_{3}} = 11$, and a search over the first $10^8$ integers suggests that $\max{S_{15}} = 23$, $\max{S_{\prod_{i=2}^{35} p_i}} = 355$. Here, $p_i$ denotes the $i^\text{th}$ prime.

\subsection*{Outline of the paper}

In Section \ref{sec:aux_results}, we provide all of the necessary notation and auxiliary results which will be needed throughout the paper.
In Section \ref{sec:analytic_bounds}, we will prove an important lemma.
Finally, in Section \ref{sec:main_results}, we will prove the main results of this paper.

\subsection*{Acknowledgements}

We would like to thank Tim Trudgian for his comments and bringing this project to our attention. We would also like to thank Nathan Ng and Stephan Garcia for their feedback.

\section{Notation and Auxiliary results}\label{sec:aux_results}

\subsection{Notation}

Throughout, $p$ will denote a prime number and $n$ will denote an integer. Further, $\varphi$ denotes the Euler-phi function, $\mu$ denotes the M\"{o}bius function,
\begin{align*}
    \theta(x)&=\sum_{p\leq x} \log p,\quad
    \theta(x;q,a)=\sum_{\substack{{p\leq x}\\{p \equiv a \imod{q}}}} \log p,\quad 
    \mu_2(n) = \sum_{a^2|n}\mu(a),\\
    R(n) &= \sum_{p \leq n} \mu_2(n-p) \log{p} = \sum_{a \leq n^{\frac{1}{2}}} \mu(a) \theta(n;a^2,n),
\end{align*}
in which $\mu_2(n)=1$ if $n$ is square-free, and $\mu_2(n)=0$ otherwise. We also use $\mu^2$ as a square-free identifier function and $(a,b)$ as the greatest common divisor function.

\subsection{Auxiliary results}\label{ssec:aux_results}

In Section \ref{sec:analytic_bounds}, we will determine estimates for $R(n)$. To do this, we will appeal to the following estimate, which follows from the work of Bennett et al. \cite{bennett2018}.

\begin{prop}\label{prop:bennett_prop}
For each square $a^2\in [2^2, 316^2]$ and integer $n$ which is co-prime to $a$, there exist explicit constants $c_{\theta}(a^2)$ and $x_{\theta}(a^2) \leq 4.81\cdot 10^9$ such that
$$\left| \theta(x;a^2,x) - \frac{x}{\varphi(a^2)} \right| < c_{\theta}(a^2)\frac{x}{\log x}$$
for all $x\geq x_{\theta}(a^2)$.
\end{prop}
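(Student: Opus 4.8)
The plan is to read the estimate off from the explicit effective bounds for primes in arithmetic progressions due to Bennett et al.\ \cite{bennett2018}. With $q = a^2$, the quantity $\theta(x;a^2,x)$ is exactly $\theta(x;q,a')$ with the residue class $a'$ determined by $x$. Since $a^2$ and $a$ have the same prime divisors, the coprimality hypothesis $(n,a)=1$ is precisely the condition that this residue class is reduced modulo $a^2$; hence the prime number theorem for arithmetic progressions predicts the main term $x/\varphi(a^2)$, and the entire content of the proposition is an explicit bound on the error term of the expected shape $x/\log x$.

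First I would check that the range of moduli is admissible. As $a$ runs over $[2,316]$ the modulus $q = a^2$ takes finitely many values, all satisfying $q \le 316^2 = 99856 < 10^5$; in fact the cutoff $a = 316$ is chosen precisely so that $a^2$ stays below $10^5$, the upper limit of the moduli for which Bennett et al.\ supply explicit constants. For each such $q$ their results give a bound of the form $|\theta(x;q,a') - x/\varphi(q)| < c(q)\,x/\log x$ valid for $x \ge x_0(q)$ and, crucially, \emph{uniform} over all residue classes $a'$ coprime to $q$. This uniformity is what the present setting requires, since in $\theta(x;a^2,x)$ the class varies with $x$ rather than being fixed; taking $c_\theta(a^2)$ and $x_\theta(a^2)$ to be the constants attached to the modulus $q = a^2$ (which depend only on $q$, not on $a'$) then yields the stated inequality for every admissible $x$.

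The substantive work is bookkeeping and numerical verification rather than the discovery of new structure. I would extract the explicit constants $c(q)$ and thresholds $x_0(q)$ for each square modulus $q = a^2$ with $2 \le a \le 316$ from the tables and formulae of \cite{bennett2018}, translate their normalisation (typically phrased as a relative error against $x/\varphi(q)$, sometimes with a $\sqrt{x}$-type secondary term) into the stated $x/\log x$ form, and set $c_\theta(a^2)$ and $x_\theta(a^2)$ accordingly. The step I expect to be most delicate is confirming the \emph{uniform} threshold $\max_{2 \le a \le 316} x_\theta(a^2) \le 4.81 \cdot 10^9$: Bennett et al.\ treat small and large moduli by different means, and for the larger squares (those approaching $10^5$) the admissible starting point $x_0(q)$ is governed by the height to which the zeros of the relevant Dirichlet $L$-functions have been verified, so one must take care to supplement with classical explicit estimates (such as those of Ramar\'e--Rumely) wherever the general theorem is weaker. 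Once every one of the finitely many square moduli in range is checked to yield a finite explicit constant and a threshold below $4.81 \cdot 10^9$, the proposition follows at once.
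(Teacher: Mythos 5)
Your proposal is correct and follows essentially the same route as the paper: both appeal to Theorem 1.2 of Bennett et al.\ (valid for all moduli $3\leq q\leq 10^5$, uniformly in the reduced residue class, with thresholds $x_\theta(q)\leq 8\cdot 10^9$), specialise to the square moduli $q=a^2$ with $2\leq a\leq 316 < \sqrt{10^5}$, and read the constants $c_\theta(a^2)$ and thresholds off the published tables, verifying that the maximum threshold over these squares is below $4.81\cdot 10^9$. Your worry about supplementing with Ramar\'e--Rumely for the larger square moduli turns out to be unnecessary, since Bennett et al.'s theorem already covers the full range with admissible thresholds; this is precisely why the paper uses it in place of Ramar\'e--Rumely (as Dudek had used).
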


\begin{proof}
For each $3\leq q\leq 10^5$ and integers $a$ such that $(a,q) = 1$, Bennett et al. \cite[Theorem 1.2]{bennett2018} provide explicit constants $c_{\theta}(q)$ and $x_{\theta}(q) \leq 8\cdot 10^9$ such that
$$\left| \theta(x;q,a) - \frac{x}{\varphi(q)} \right| < c_{\theta}(q)\frac{x}{\log x}$$
for all $x\geq x_{\theta}(q)$. Analysis on the values of $c_{\theta}(q)$ and $x_{\theta}(q)$ from the tables\footnote{The tables are available at \url{https://www.nt.math.ubc.ca/BeMaObRe/}.} provided for \cite{bennett2018} at each square $q = a^2$ in this range will yield the constants $c_{\theta}(a^2)$ and demonstrate that the maximum value of $x_{\theta}(a^2)$ is $\num{4800162889}\leq 4.81\cdot 10^9$. 
\end{proof}

We will also make use of the following estimate for $\theta(x)$ from Broadbent et al. \cite[Theorem 1]{broadbent2020}. 
\begin{thm}[Broadbent et al.]\label{thm:lethbridge_thm}
For $x > e^{20} \approx 3.59\cdot10^9$, we have
 \begin{equation}\label{eqn:lethbridge_k}
      \lvert \theta(x) - x \rvert \leq 0. 375 \frac{x}{\log^3 x}.
 \end{equation}
\end{thm}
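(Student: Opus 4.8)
The plan is to transfer the problem to the second Chebyshev function $\psi(x)=\sum_{n\le x}\Lambda(n)$ and then to exploit the explicit formula together with numerical and analytic information about the zeros of the Riemann zeta function $\zeta(s)$. Since $\psi(x)-\theta(x)=\sum_{k\ge 2}\theta(x^{1/k})$ is of size $O(x^{1/2}\log x)$, a sufficiently strong bound of the shape $\lvert\psi(x)-x\rvert\le \varepsilon(x)\,x$ immediately yields a bound of the same quality for $\theta(x)$ after subtracting the explicitly estimated prime-power terms; this lower-order correction is harmless against a main error of size $x/\log^3 x$. First I would therefore concentrate entirely on bounding $\lvert\psi(x)-x\rvert$.

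For $\psi$ I would use a smoothed explicit formula. Rather than working with the sharp-cutoff identity $\psi(x)=x-\sum_{\rho}\frac{x^{\rho}}{\rho}-\log(2\pi)-\tfrac12\log(1-x^{-2})$, whose sum over zeros converges too slowly for explicit estimates, I would convolve $\psi$ against a well-chosen nonnegative kernel so that the weights attached to the zeros $\rho=\beta+i\gamma$ decay rapidly in $\lvert\gamma\rvert$. The error introduced by the smoothing is controlled by the total mass and support of the kernel, and is balanced against the saving it produces in the sum over zeros. The three analytic inputs are then: (i) a numerical verification that every nontrivial zero with $\lvert\gamma\rvert\le T$ satisfies $\beta=\tfrac12$ (the Riemann Hypothesis up to height $T$, due to Platt and others); (ii) a classical zero-free region $\beta\le 1-\frac{1}{R_0\log\lvert\gamma\rvert}$ valid for $\lvert\gamma\rvert\ge T$; and (iii) an explicit bound for the zero-counting function $N(T)$.

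The heart of the argument is the estimation of the smoothed sum over zeros. I would split it at height $T$: for $\lvert\gamma\rvert\le T$ the verified Riemann Hypothesis forces $\lvert x^{\rho}\rvert=x^{1/2}$, so these terms contribute at most a quantity of size $x^{1/2}$ times a convergent sum governed by $N(T)$; for $\lvert\gamma\rvert>T$ the zero-free region gives $\lvert x^{\rho}\rvert\le x\exp\!\bigl(-\tfrac{\log x}{R_0\log\lvert\gamma\rvert}\bigr)$, and summing against the decaying kernel weights using (iii) yields a tail of size roughly $x\exp(-c\sqrt{\log x})$. Choosing the kernel parameters and the splitting height $T$ as functions of $x$ to equalise these contributions produces an error that, for all $x>e^{20}$, is comfortably dominated by $0.375\,x/\log^3 x$ --- the power $\log^{-3}x$ being merely a clean envelope for the genuinely faster Vinogradov--Korobov-type decay.

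The main obstacle is the quantitative optimisation rather than any single conceptual step: one must track every implied constant through the smoothing, the split over zeros, and the passage from $\psi$ back to $\theta$, and then tune the free parameters so that the accumulated constant falls below $0.375$ precisely at the threshold $x=e^{20}$. This is delicate because improving the constant in the tail forces a larger verification height $T$, while the low-zero contribution and the smoothing error pull in the opposite direction; securing the bound all the way down to $e^{20}$ (as opposed to some much larger $x_0$) is exactly where the careful balancing and the strength of the available Riemann Hypothesis verification become essential.
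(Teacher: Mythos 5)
The paper itself contains no proof of this theorem: it is quoted verbatim as Theorem~1 of Broadbent--Kadiri--Lumley--Ng--Wilk \cite{broadbent2020}, so your attempt has to be measured against the proof in that reference. Your sketch does reproduce the architecture of their argument for \emph{large} $x$ --- a smoothed explicit formula for $\psi$, the numerical verification of RH to height $\approx 3\cdot 10^{10}$, an explicit zero-free region, and explicit bounds for $N(T)$ --- although even there you omit the explicit zero-density estimates (Kadiri--Lumley--Ng) that are indispensable in the intermediate range where the verified zeros no longer dominate but the zero-free region is still far too weak. The genuine gap, however, is at the other end: your plan fails precisely at the threshold where the theorem starts, and the claim that your error terms are ``comfortably dominated by $0.375\,x/\log^3 x$ for all $x>e^{20}$'' is numerically false there.

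Concretely, take $x \approx 3.59\cdot 10^9$ (incidentally this is $e^{22}$, not $e^{20}$; the numerical value is the operative one). The target is $0.375\,x/\log^3 x \approx 1.26\cdot 10^5$. First, the correction $\psi(x)-\theta(x) \approx \sqrt{x} \approx 6\cdot 10^4$, which you dismiss as ``harmless,'' is already about half of the entire error budget. Second, any argument that bounds the zero sum term-by-term is dead on arrival: even the zeros with $|\gamma| \le 1000$ contribute about $2x^{1/2}\sum_{0<\gamma<1000}\gamma^{-1} \approx 4\sqrt{x} \approx 2.4\cdot 10^5$, already exceeding the target, and with the kernel width $\delta \sim 10^{-6}$ forced by your smoothing-error constraint the full critical-line contribution is roughly $20\sqrt{x}$, an order of magnitude too large. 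Third, the zero-free-region tail $x\exp\bigl(-\sqrt{\log x / R_0}\bigr)$ with $R_0 \approx 5.57$ is about $0.14\,x$ at this $x$ --- more than three orders of magnitude above the target --- so the split you describe cannot be balanced at all near the threshold. What the cited proof actually uses in the range $x \le 10^{19}$ is B\"uthe's partial-RH method (built on carefully optimized Fourier test-function pairs, not a generic nonnegative kernel), which yields $|\theta(x)-x| < 1.95\sqrt{x}$ there; the theorem's threshold is exactly the point where $1.95\sqrt{x}$ crosses $0.375\,x/\log^3 x$, with a margin of only about $8\%$. Without B\"uthe's bound at the low end and zero-density estimates in the middle, the proposal does not close, and no amount of tuning of the parameters you introduced can rescue it.
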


\section{An Important Lemma}\label{sec:analytic_bounds}

Theorem \ref{thm:dudek} is true as long as $R(n) > 0$ for all $n \geq 3$.
To prove Theorem \ref{thm:dudek}, Dudek found an explicit lower bound for $R(n)$ in \cite[Section 2.3]{dudek2017}, which inferred that $R(n) > 0$ for all $n \geq 10^{10}$, then computationally checked $R(n) > 0$ for $n \in [3, 10^{10})$.

We will prove Lemma \ref{thm:analytic_bound} and use it to prove the main results of this paper. The main benefit of this (over the lower bound in \cite[Section 2.3]{dudek2017}) is that we will need to manually verify our main results for a smaller range of $n$.

\begin{lem}\label{thm:analytic_bound}
Suppose $A\in (0, 1/2)$ and $n \geq 4.81\cdot 10^9$, then
\begin{align}
    \frac{R(n)}{n}
    > 0.37395 - \frac{0.95}{\log n} - \frac{0.375}{\log^3 n} &- 0.0096 \left(\frac{1+2A}{1-2A}\right)\nonumber\\
    &- \log n\left(n^{-2A} + n^{-A} - n^{A - 1} + n^{-\frac{1}{2}}\right), \label{eqn:analytic_bound_result}
\end{align}
where $0.37395$ is Artin's constant, rounded to 5 decimal places.
\end{lem}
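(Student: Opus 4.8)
The plan is to begin from the identity $R(n)=\sum_{a\le\sqrt{n}}\mu(a)\theta(n;a^2,n)$ and to partition the range of $a$, matching each block to the sharpest estimate available there. The term $a=1$ is exactly $\theta(n)$, which Theorem \ref{thm:lethbridge_thm} pins down to within $0.375\,n/\log^3 n$; this is the source of both the leading $n$ and the $-0.375/\log^3 n$ term. For $2\le a\le 316$ with $(a,n)=1$ I would apply Proposition \ref{prop:bennett_prop}, replacing $\theta(n;a^2,n)$ by its main term $n/\varphi(a^2)$ at a cost of at most $c_\theta(a^2)\,n/\log n$ for each $a$; the terms with $(a,n)>1$ in this block contribute at most a single prime apiece and are harmless. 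Summing the constants $c_\theta(a^2)$ over $2\le a\le 316$ gives the coefficient of $-1/\log n$, which numerically is at most $0.95$.

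The decisive block is $316<a\le n^A$, where no explicit counting estimate is available yet the modulus $a^2\le n^{2A}$ is comfortably smaller than $n$. Here I would use an explicit Brun--Titchmarsh (Montgomery--Vaughan) inequality in the shape $\theta(n;a^2,n)\le 2n\log n/\big(\varphi(a^2)\log(n/a^2)\big)$, together with the bound $\log(n/a^2)\ge(1-2A)\log n$, which holds precisely because $A<1/2$. Writing $\mu(a)\theta(n;a^2,n)=\mu(a)\,n/\varphi(a^2)+\mu(a)\big(\theta(n;a^2,n)-n/\varphi(a^2)\big)$ and taking the worst case $\mu(a)=-1$, the displayed inequality yields for each such $a$ the lower bound $-\tfrac{1+2A}{1-2A}\cdot n/\varphi(a^2)$ on the error. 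This is exactly where the factor $\tfrac{1+2A}{1-2A}$ is born, and summing $1/\varphi(a^2)$ over $a>316$ gives the constant $0.0096$, hence the term $-0.0096\,\tfrac{1+2A}{1-2A}$.

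For the remaining tail $n^A<a\le\sqrt n$ I would discard all arithmetic information and use only the trivial bound $\theta(n;a^2,n)\le(n/a^2+1)\log n$. Combining the estimate $\sum_{a>n^A}n/a^2=O(n^{1-A})$, the count of integers in $(n^A,\sqrt n\,]$, which is $n^{1/2}-n^A$ and whose lower endpoint contributes the helpful $+n^{A-1}$, and the truncation of the main-term series produces the block $-\log n\,(n^{-2A}+n^{-A}-n^{A-1}+n^{-1/2})$. Finally I would collect the main terms: $\sum_{(a,n)=1}\mu(a)/\varphi(a^2)=\prod_{p\nmid n}\big(1-\tfrac{1}{p(p-1)}\big)\ge\prod_{p}\big(1-\tfrac{1}{p(p-1)}\big)$, which is Artin's constant $0.37395\ldots$, since dropping the factors with $p\mid n$ only enlarges the product. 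Assembling all the blocks gives \eqref{eqn:analytic_bound_result}.

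The principal obstacle is the intermediate block: one needs an explicit Brun--Titchmarsh inequality valid uniformly for moduli up to $n^{2A}$ and sharp enough to deliver the clean constant $2$ in the numerator, and one must ensure $\log(n/a^2)$ stays bounded away from $0$, which is precisely what forces the hypothesis $A<1/2$. The rest is careful bookkeeping: verifying the numerical inequalities $\sum_{2\le a\le 316}c_\theta(a^2)\le 0.95$ and $\sum_{a>316}1/\varphi(a^2)\le 0.0096$ from the tables underlying Proposition \ref{prop:bennett_prop}, and checking that every estimate is simultaneously valid for $n\ge 4.81\cdot10^9$, the common threshold at which both Proposition \ref{prop:bennett_prop} and Theorem \ref{thm:lethbridge_thm} switch on.
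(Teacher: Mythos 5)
Your proposal follows the paper's proof essentially step for step: the same splitting at $a=1$, $2\le a\le 316$, $316<a\le n^A$, $n^A<a\le\sqrt n$; the same three inputs (Theorem \ref{thm:lethbridge_thm} for $a=1$, Proposition \ref{prop:bennett_prop} with $\sum_{2\le a\le 316}c_\theta(a^2)<0.95$, and Brun--Titchmarsh with $\log(n/a^2)\ge(1-2A)\log n$ producing the factor $\tfrac{1+2A}{1-2A}$); and the same completion of the main terms to Artin's constant. (One small misattribution: the bound $0.0096$ does not come from the Bennett et al.\ tables but from Ramar\'e's bound $\sum_{a\ge1}\mu^2(a)/\varphi(a^2)<1.95$ minus a finite computation.)

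There is, however, a genuine error in your final assembly. In the paper, the $n^{-1/2}$ term of the lemma is the price of discarding, at the outset, \emph{every} $a\le\sqrt n$ with $(a,n)>1$ (each such progression contains at most one prime, so $\theta(n;a^2,n)\le\log n$), at total cost $n^{1/2}\log n$; this discard is also what licenses restricting the middle block to $(a,n)=1$, which your write-up silently needs. The tail block itself yields only $-\log n\,(n^{1-2A}+n^{1-A}-n^{A})$, with no $n^{1/2}$ term, because the $-n^{1/2}$ from $\sum_{n^A<a\le\sqrt n}a^{-2}\le n^{-2A}+n^{-A}-n^{-1/2}$ cancels the $+n^{1/2}$ from the integer count. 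Consequently the tail block has exactly $n^{1/2}\log n$ of slack relative to the stated bound, and your plan to absorb ``the truncation of the main-term series,'' i.e.\ $-n\sum_{a>n^A,\,(a,n)=1}\mu(a)/\varphi(a^2)$, into that block cannot work: this term can only be bounded in size by $n\sum_{a>n^A}\mu^2(a)/\varphi(a^2)$, which is of order $n^{1-A}$ and so exceeds $n^{1/2}\log n$ for every fixed $A<1/2$ once $n$ is large (already at $A=0.33$, the value the paper needs downstream, it is an order of magnitude too big at $n=4.81\cdot10^9$). The correct routing --- and the paper's --- is to merge the truncation with the Brun--Titchmarsh error using $\tfrac{1+2A}{1-2A}\ge1$:
\[
-\sum_{\substack{a>n^A\\(a,n)=1}}\frac{\mu(a)}{\varphi(a^2)}-\left(\frac{1+2A}{1-2A}\right)\sum_{\substack{316<a\le n^A\\(a,n)=1}}\frac{\mu^2(a)}{\varphi(a^2)}
\;\ge\;-\left(\frac{1+2A}{1-2A}\right)\sum_{\substack{a>316\\(a,n)=1}}\frac{\mu^2(a)}{\varphi(a^2)}
\;>\;-0.0096\left(\frac{1+2A}{1-2A}\right).
\]
You already hold the needed resource --- your $0.0096$ bounds the sum over \emph{all} $a>316$, not just $a\le n^A$ --- so the fix costs nothing; but as written, with that sum spent entirely on the middle block and the truncation sent to the tail, the claimed inequality does not follow.
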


The improvements we obtain come from Proposition \ref{prop:bennett_prop} (which is wider-reaching than the results from Ramar\'{e}--Rumely \cite{ramare1996} which Dudek used), and Theorem \ref{thm:lethbridge_thm}.

\subsection{Set-up} 

Trivially, if $(a,n) > 1$, then $\theta(n;a^2,n) \leq \log n$. Therefore,
$$R(n) > \sum_{\substack{a\leq n^{\frac{1}{2}}\\(a,n)=1}}\mu(a)\theta(n;a^2,n) - n^{\frac{1}{2}}\log n = \Sigma_1 + \Sigma_2 + \Sigma_3 - n^\frac{1}{2}\log{n},$$
in which  $A \in \left(0,1/2\right)$ will be chosen later,
\begin{align*}
    \Sigma_1 = \sum_{\substack{a \leq 316 \\ (a,n) = 1}} \mu(a)\theta(n;a^2,n),\quad
    \Sigma_2 &= \sum_{\substack{316 < a \leq n^A \\ (a, n) = 1}} \mu(a)\theta(n;a^2,n),\mbox{ and}\\
    \Sigma_3 &= \sum_{\substack{n^A < a \leq n^\frac{1}{2} \\ (a, n) = 1}} \mu(a)\theta(n;a^2,n).
\end{align*}
We will bound $\Sigma_1 + \Sigma_2$ and $\Sigma_3$ separately.

\subsection{Bounding $\Sigma_1 + \Sigma_2$}\label{subsec:first}

We start by listing important bounds, which we will use to deduce a lower bound for $\Sigma_1 + \Sigma_2$.
First, we observe by computation that
\begin{equation}\label{eq:uno}
    \sum_{2\leq a\leq 316} c_{\theta}(a^2) = 0.9474935 < 0.95.
\end{equation}
Second, suppose that $c$ denotes Artin's constant. It follows from computations by Wrench \cite{wrench1961} that
\begin{equation}\label{eq:dos}
    \sum_{(a,n) = 1} \frac{\mu(a)}{\varphi(a^2)} > \prod_p \left(1 - \frac{1}{p(p - 1)}\right) = c > 0.37395.
\end{equation}
Third, in the range $316 < a \leq n^A$, the Brun--Titchmarsh theorem \cite{montgomery1973} yields
\begin{equation}\label{eq:tres_Brun_Titchmarsh}
    \theta(n;a^2,n) = \frac{n}{\varphi(a^2)} + \varepsilon \left(\frac{1+2A}{1-2A}\right)\frac{n}{\varphi(a^2)},
\end{equation}
such that $|\varepsilon | < 1$.
Finally, we may observe that
\begin{equation}\label{eq:quattro}
    \sum_{\substack{a>316\\(a,n)=1}}\frac{\mu(a)}{\varphi(a^2)}
    \leq \sum_{a=1}^\infty \frac{\mu^2(a)}{\varphi(a^2)} - \sum_{a\leq 316}\frac{\mu^2(a)}{\varphi(a^2)}
    < 0.0096.
\end{equation}
To deduce the upper bound, we observed that an upper bound for the infinite sum is $1.95$ \cite{ramare1995} and computed the finite sum manually. Computations suggest that \eqref{eq:quattro} may be numerically improved, but this is unnecessary for our purposes.


To bound $\Sigma_1$, combine Proposition \ref{prop:bennett_prop}, Theorem \ref{thm:lethbridge_thm}, \eqref{eq:uno} and \eqref{eq:dos} to yield
\begin{align*}
    \Sigma_1
    &> n \left( \sum_{\substack{2\leq a \leq 316 \\ (a,n) = 1}} \frac{\mu(a)}{\varphi(a^2)} - \sum_{\substack{2\leq a \leq 316 \\ (a,n) = 1}} \frac{c_{\theta}(a^2)\mu(a)}{\log n}  + 1 - \frac{0.375}{\log^3 n}\right)\\
    &>  n \left( \sum_{(a,n)=1} \frac{\mu(a)}{\varphi(a^2)} - \sum_{\substack{a > 316 \\ (a,n) = 1}} \frac{\mu(a)}{\varphi(a^2)} - \sum_{\substack{2\leq a\leq 316\\(a,n)=1}}\frac{c_{\theta}(a^2)}{\log n} - \frac{0.375}{\log^3 n}\right)\\
    &>  n \left( 0.37395 - \sum_{\substack{a > 316 \\ (a,n) = 1}} \frac{\mu(a)}{\varphi(a^2)} - \frac{0.95}{\log n} - \frac{0.375}{\log^3 n}\right),
\end{align*}
since the $+1$ gets absorbed into the left-most sum. Next, use \eqref{eq:tres_Brun_Titchmarsh} to see that
\begin{align*}
    \Sigma_2
    &> n\left(\sum_{\substack{316 < a \leq n^A \\ (a, n) = 1}} \frac{\mu(a)}{\varphi(a^2)} - \left(\frac{1 + 2A}{1 - 2A}\right) \sum_{\substack{316 < a \leq n^A \\ (a, n) = 1}} \frac{\mu^2(a)}{\varphi(a^2)} \right).
\end{align*}
Finally, using the preceding observations and \eqref{eq:quattro}, $\Sigma_1 + \Sigma_2$ is larger than
\begin{align*}
    n &\left(0.37395 - \sum_{\substack{a > n^A \\ (a, n) = 1}} \frac{\mu(a)}{\varphi(a^2)} - \frac{0.95}{\log n} - \frac{0.375}{\log^3 n}-\left(\frac{1+2A}{1-2A}\right)\sum_{\substack{316<a\leq n^A\\(a,n)=1}}\frac{\mu^2(a)}{\varphi(a^2)} \right)\\
    &\quad > n \left(0.37395 - \frac{0.95}{\log n} - \frac{0.375}{\log^3 n} -\left(\frac{1+2A}{1-2A}\right)\sum_{\substack{a>316\\(a,n)=1}}\frac{\mu^2(a)}{\varphi(a^2)} \right)\\
    &\quad > n \left(0.37395 - \frac{0.95}{\log n} - \frac{0.375}{\log^3 n} - 0.0096 \left(\frac{1+2A}{1-2A}\right) \right).
\end{align*}

\subsection{Final steps}

Following a similar logic to Dudek \cite{dudek2017}, with less waste, we used a trivial bound for $\theta(n;a^2,n)$ to bound $|\Sigma_3|$ and obtain
\begin{equation}\label{eqn:Sigma_3}
    \Sigma_3 > - n \log n\left(n^{-2A} + n^{-A} - n^{A - 1}\right).
\end{equation}
Combining our preceding observations, we have established \eqref{eqn:analytic_bound_result} for all $n\geq 4.81\cdot 10^9$.

%

\section{Main Results}\label{sec:main_results}

In this section, we will establish all of the main results of this paper.
We treat Theorem \ref{thm:one_prime_two} separately, but to prove the remaining results, we will use Lemma \ref{thm:analytic_bound} to establish them for large $n$ and the algorithm described in Section \ref{sec:computation} to verify the results for small $n$.

Suppose that $2\leq q\leq 10^5$ is prime and $R_q(n)$ denotes the weighted number of representations of $n$ as the sum of a prime and a square-free number coprime to $q$. Then, we have
$$R_q(n) = \sum_{\substack{p \leq n \\ p \not\equiv n \imod{q}}} \mu^2(n-p)\log{p} = R(n) - \sum_{\substack{p \leq n \\ p \equiv n \imod{q}}} \mu^2(n-p)\log{p}.$$
Therefore, to show $R_q(n) > 0$, it suffices to demonstrate
\begin{equation}\label{eqn:rvsrq}
    R(n) > \sum_{\substack{p \leq n \\ p \equiv n \imod{q}}} \mu^2(n-p)\log{p}.
\end{equation}

\subsection{Proof of Theorem \ref{thm:one_prime_two}}\label{ssec:two}

It is an equivalent problem to establish $R_2(n)>0$ for all even $n > 2$. First, we observe that for each \textit{odd} $n$, $n-p$ is even for all odd primes $p$, hence
\begin{equation*}
    R_2(n) = \begin{cases} \log(n-2) & \mbox{if }\mu^2(n - 2) = 1,\\0 & \mbox{if }\mu(n - 2) = 0. \end{cases}
\end{equation*}
There are infinitely many odd choices for $n$ such that $\mu(n - 2) = 0$, thence our restriction to \textit{even} $n$ in this case.

Suppose that $n$ is \textit{even}, then $R_2(n) > 0$ if and only if $R(n) > \theta(n;2,n)$.
If $(n, q) > 1$, then $\theta(n;q,n) \leq \log{q}$. Therefore it suffices to show that $R(n) > \log 2$.
If $n \geq 4$, then Theorem \ref{thm:dudek} guarantees that there exists at least one prime $p \in (2,n)$ such that $\mu^2(n-p)=1$. Note that $p\neq 2$, because $n-2 \geq 2$ is even for even $n$, so $(n-2,2)\neq 1$.
It follows that there exists a prime $p \in (2,n)$ such that $R(n) > \log{p} > \log{2}$. 
\qed

\subsection{Proof of Theorem \ref{thm:one_prime_general} for large $n$}\label{ssec:one_gen}

Suppose $3 < q \leq 10^5$ is prime, then Theorem \ref{thm:one_prime_general} holds for $n \geq 8\cdot 10^9$ if and only if $R_q(n) > 0$. Using \eqref{eqn:rvsrq}, it suffices to show that $R(n) > \theta(n;q,n)$.
By Lemma \ref{thm:analytic_bound}, we need to show that there exists $A\in (0,1/2)$ such that
\begin{align}
    0.37395 - \frac{0.95}{\log{n}} &- \frac{0.375}{\log^3{n}} - 0.0096 \left(\frac{1+2A}{1-2A}\right)\nonumber\\
    &- \left(n^{-2A} + n^{- A} - n^{A-1} + n^{-\frac{1}{2}}\right)\log{n} > \frac{\theta(n;q,n)}{n}\label{eqn:sufficientcheck}.
\end{align}
We may use Proposition \ref{prop:bennett_prop} to estimate $\theta(n;q,n)$ for each $3 \leq q \leq 10^5$. It follows that $A = 0.33$ implies \eqref{eqn:sufficientcheck} for all primes $q$ in our assumed range. 
\qed

\subsection{Proof of Theorem \ref{thm:one_prime_three} for large $n$}\label{ssec:one_three}

We use a similar method to the preceding proof, although we must consider the case $q=3$ separately because it is clear that $1/\varphi(3) = 1/2 > 0.37395$. So, we will need to consider a stronger version of \eqref{eqn:sufficientcheck}.
Observe that 
\begin{equation}\label{eqn:mod3breakdown}
   \sum_{\substack{p \leq n \\ p \equiv n \imod{3}}}\mu^2(n-p)\log{p} = \sum_{\substack{p \leq n \\ p \equiv n \imod{3}}} \log{p} - \sum_{\substack{p \leq n \\ p \equiv n \imod{3} \\ \mu(n-p) = 0}}\log{p}.
\end{equation}
An inclusion-exclusion argument yields
\begin{align*}
    \sum_{\substack{p \leq n \\ p \equiv n \imod{3} \\ \mu(n-p) = 0}}\log{p}
    &> \sum_{\substack{p \leq n \\ p \equiv n \imod{9} \textrm{ or } \\  p \equiv n \imod{12} \textrm{ or } \\ p \equiv n \imod{75} } } \log{p} \\
    &= \theta(n;9,n) + \theta(n;12,n) + \theta(n;75,n) \\ 
    &\qquad - \theta(n;36,n) - \theta(n;225,n) - \theta(n;300,n) + \theta(n;900,n).
\end{align*}
Therefore, \eqref{eqn:mod3breakdown} yields
\begin{align}
    \sum_{\substack{p \leq n \\ p \equiv n \imod{3}}}&\mu^2(n-p)\log{p} < \theta(n;3,n) - \theta(n;9,n) - \theta(n;12,n) - \theta(n;75,n)\\ 
    &\qquad + \theta(n;36,n)+ \theta(n;225,n) + \theta(n;300,n) - \theta(n;900,n).\label{eqn:allthetas}
\end{align}
Using the explicit bounds from Bennett et al. \cite[Theorem 1.2]{bennett2018} to estimate each $\theta(n;q,n)$ term in \eqref{eqn:allthetas} according to these values establishes
\begin{equation}\label{eqn:modupperbound}
    \sum_{\substack{p \leq n \\ p \equiv n \imod{3}}}\mu^2(n-p)\log{p} < \frac{19}{120}n + 0.00592\frac{n}{\log{n}}.
\end{equation} 

We may compare \eqref{eqn:modupperbound} with Lemma \ref{thm:analytic_bound}, and thereby establish \eqref{eqn:rvsrq} whenever 
\begin{align}
    0.37395 - \frac{0.95}{\log n} &- \frac{0.375}{\log^3{n}} - 0.0096 \left(\frac{1+2A}{1-2A}\right) \label{eqn:finalcheck}\\
    &- \left( n^{-2A} + n^{- A} + n^{A-1} - n^{-\frac{1}{2}}\right)\log n  > \frac{19}{120} + \frac{0.00592}{\log{n}}.\nonumber
\end{align}
Choosing $A = 0.33$ will verify that \eqref{eqn:finalcheck} holds for $n \geq 8\cdot 10^9$.
\qed

\subsection{Proof of Corollary \ref{thm:two_prime_general} for large $n$}\label{subsec:extension}


Suppose that $2\leq q\leq 10^5$ is prime. We will consider the cases $q=2$ and $q \geq 3$ separately. In the former case, we did not require the computations (which will be outlined in section \ref{sec:computation}) to verify that the result is true for small $n$, so we consider a larger range for $n$ in this case.

If $q = 2$ and $n > 4$ is \textit{even}, then $n - 2 > 2$ is also even. Therefore, there exists a prime $p_1$ and odd square-free number $\eta_1$ such that $n - 2 = p_1 + \eta_1$ by Theorem \ref{thm:one_prime_two}. Moreover, if $q = 2$ and $n > 5$ is \textit{odd}, then $n - 3 > 2$ is even. Therefore, there exists a prime $p_2$ and odd square-free number $\eta_2$ such that $n - 3 = p_2 + \eta_2$ by Theorem \ref{thm:one_prime_two}. To complete Corollary \ref{thm:two_prime_general} at $q = 2$, observe that $5 = 2 + 2 + 1$.

If $q \geq 3$ and $n \geq 8\cdot 10^9$, then suppose that
\begin{equation*}
    T(n)
    := \sum_{\substack{p\leq n\\n - p \not\in \{1,2,11\}}}\mu^2(n-p)\log p 
    > R(n) - 3 \log n .
\end{equation*}
If $T(n) > 0$ then there exists at least one prime $p_3$ such that $n-p_3 > 2$ and $n-p_3\neq 11$. Hence, Corollary \ref{thm:two_prime_general} is also true by corollary of Theorem \ref{thm:one_prime_general} for $q > 3$, and by corollary of Theorem \ref{thm:one_prime_three} for $q=3$.
Therefore, it suffices to show $T(n) > 0$ in the desired range of $n$.
It follows from Lemma \ref{thm:analytic_bound} that
\begin{align}
    \frac{T(n)}{n}
    > 0.37395 - \frac{0.95}{\log n} &- \frac{0.375}{\log^3 n} - 0.0096 \left(\frac{1+2A}{1-2A}\right)\nonumber\\
    &- \log n\left(n^{-2A} + n^{-A} - n^{A - 1} + n^{-\frac{1}{2}} + \frac{3}{n}\right).\label{eqn:final_appl_3_large_n}
\end{align}
Now, \eqref{eqn:final_appl_3_large_n} with $A=0.385$ implies the result for large $n$.
\qed

\subsection{Computations}\label{sec:computation}

To complete each of our main results, we verified each result for small $n$, wherever necessary. We did this computationally, by slightly adapting the algorithm used by Dudek in \cite[p.~239]{dudek2017}. Our computations took just short of 7 hours on a machine equipped with 3.20 GHz CPU, using Maple${}^\mathrm{TM}$ \footnote{Maple is a trademark of Waterloo Maple, inc.}.

If $3 < n \leq 4\cdot10^{18}$ is even, we know by Oliveira e Silva et al. \cite{oliveira2014} that $n$ is the sum of two primes. Unless $n = q+q$ for some prime $q \in \left[3, 10^5\right]$, we are done. When $n = q + q$, it is a simple task to verify that it has at least one other representation as a prime plus a square-free co-prime to $q$. Hence, we only need to consider odd integers between $3$ and $8\cdot10^{8}$. 

As in Dudek's algorithm, we pre-compute a set $S$ of square-free numbers up to $2\cdot10^7$. We break the problem up, considering $n$ in intervals of the form
\[I_a = \left[a\cdot10^7, (a+1)\cdot10^7\right),\] 
where $a$ is an integer between 1 and 800. For each such $a$, we compute decreasing lists $P_a = \left(p_1, p_2,\ldots, p_{100}\right)$ of the 100 largest primes in $I_{a-1}$. Starting with the smallest odd $n$ in $I_a$, we check if $n-p_i$ is in $S$ as $i$ ranges from 1 to 100. Each time this check is successful, we compute the $\gcd$ of $n-p_i$ with all previous successful $n-p_j$, moving on to $n+2$ when this $\gcd$ equals 2 (that is, when there is a representation with a square-free number co-prime to every prime $q  \in \left[3, 10^5\right]$). If there were any $n$ for which the largest 100 primes did not produce all the appropriate representations, we could have checked these cases separately with more primes. However, our program did not return any such $n$.

For the initial interval $n \in \left(2, 10^7\right)$, a similar check can be used. Relevant representations can easily be found for $n$ up to $10^6$, with the exception of $n = 2$ and $n = 11$ (which is an exception only when $q = 3$). Then, letting $P_0$ be the set of the 100 largest primes less than $10^6$, we perform the same check as we did for the other intervals to $n \in (10^6, 10^7)$, finding no new exceptions.

To verify Corollary \ref{thm:two_prime_general}, we adapted the algorithm above. Note that we only need to check the even $n$ in this scenario, since the result follows directly from the ternary Goldbach conjecture for odd $n$ \cite{helfgott2013}. Importantly, we used $S' = S \setminus \{1,2,11\}$ in place of $S$ and no exceptions were found for $n$ between 5 and $8\cdot10^9$.

\bibliographystyle{amsplain}
\bibliography{references}

\end{document}